\newtheorem{theorem}{Theorem}[section]
\newtheorem{lemma}{Lemma}[section]
\newtheorem{corollary}{Corollary}[section]
\newtheorem{proposition}{Proposition}[section]
\newtheorem{definition}{Definition}[section]
\newtheorem{remark}{Remark}[section]
\newtheorem{example}{Example}[section]
\newcommand{\ma}[1]{\ensuremath{\mathbb{#1}}}
\font\bb=msbm7 at 10 pt
\def \Z {\hbox{\bb Z}}
\def \Q {\hbox{\bb Q}}
\def \N {\hbox{\bb N}}
\def \F {\hbox{\bb F}}
\def \R {\hbox{\bb R}}
\def \A {\hbox{\bb A}}
\def \Tr {\mbox{\rm{Tr}}}
\def \d {\mbox{\bf{d}}}
\def \v {\mbox{\bf{v}}}
\def \AA {\mathcal{A}}
\def \L {\mathcal{L}}
\def \Q {\hbox{\bb Q}}
\def \M {\mathcal{M}}
\newcommand{\Spec}{\ensuremath{\mbox{\rm{Spec }}}}
\author{R\'egis Blache}
\address{\'Equipe AOC,
IUFM de la Guadeloupe}
\email{blache@iufm.univ-ag.fr}
\title[$p$-density and applications]{$p$-density, exponential sums and Artin-Schreier curves}
\begin{document}

\begin{abstract}
In this paper we define the $p$-density of a finite subset $D\subset\ma{N}^r$, and show that it gives a good lower bound for the $p$-adic valuation of exponential sums over finite fields of characteristic $p$. We also give an application: when $r=1$, the $p$-density is the first slope of the generic Newton polygon of the family of Artin-Schreier curves associated to polynomials with their exponents in $D$.
\end{abstract}



\subjclass{11M38,14F30,52B20}
\keywords{Character sums, $L$-functions, Newton polygons and polytopes}

\maketitle

\section{Introduction} 

This paper deals with the $p$-adic valuation of exponential sums over a finite field of characteristic $p$. A classical result in this field is Stickelberger's theorem on the valuation of Gauss sums \cite{st}. It is closely related to the question of the existence of rational points on an algebraic variety over a finite field. This question was raised by E. Artin, then solved by Chevalley, and precised by Warning \cite{war}, leading to the celebrated Chevalley-Warning theorem. During the 60s, the emergence of Dwork's ideas allowed improvements on this result, first by Ax \cite{ax}, then by Katz \cite{ka1}. Both based their result on a suitable estimation for the $p$-adic valuation of exponential sums, and their bounds only depends on the degrees of the polynomials defining the variety. More recently, Adolphson and Sperber have improved these results, considering more precisely the monomials appearing in the polynomial \cite{as}. They look at its Newton polyhedron, i.e. the convex closure of the exponents effectively appearing in it.

\medskip

All these bounds are independent of the prime $p$, and are optimal in the following sense: if we fix degrees (resp. a Newton polyhedron), then for any $p$ we can find a (system of) polynomial(s) such that the valuation of the associated exponential sum meets the bound.

\medskip

In the 90s, Moreno and Moreno took into account the prime $p$. Using a reduction to the ground fied method, they replaced the degrees of the polynomials by their $p$-weights (the sums of their base $p$ digits) and found a new bound, improving the existing ones in many cases \cite{mm1}. 
Recently, in a joint work with Castro, Kumar and Shum, O. Moreno reduced the problem of estimating the $p$-adic valuation of an exponential sum to the one of estimating the minimal $p$-weight of the set of solutions a system of modular equations \cite{mm2}. They shown that their bound is tight. Note that it must be finer than the preceding ones. It is the first one really taking into account only the monomials appearing: when we use a Newton polyhedron, we consider a convex hull, and perhaps ``add" monomials which were not originally in the polynomial we were considering.

\medskip

Let us be more precise: for a prime $p$, and some of its powers $q$, let $f\in \F_q[x_1,\dots,x_r]$ be a polynomial, and let $D$ the set of exponents in $\N^r$ of the monomials of $f$. The system introduced by Moreno et al. in order to give a bound for the $p$-adic valuation of the exponential sum asociated to $f$ over $\F_q$ involves equations modulo $q-1$ whose coefficients are the coordinates of the elements of $D$. Moreno et al. give lower bounds for the $p$-weights of the solutions, which often give improvements on the previously known bounds. 

\medskip

In this paper we define the $p$-density of a finite subset $D$ of $\N^r$. It is a lower bound for the $p$-weights of the solutions of the Moreno et al. system, depending only on the prime $p$ and the set $D$. It is tight in the sense that there exists infinitely many powers $q$ of $p$ such that it is attained for the modulus $q-1$. As a consequence, it allows to give uniform bounds on the valuation (depending only on $p$ and $D$). Moreover we are able to bound the minimal power for which it is optimal.

\medskip

Apart from the Chevalley-Warning type theorems, these bounds have many applications. In this paper we give one to the generic first slope of Artin Schreier curves with $p$-rank zero ({\it cf} \cite{sz1},\cite{sz2},\cite{sz3}). Recall that these are curves defined over $\F_q$ with an affine equation of the form
$$y^p-y=f(x),~f\in \F_q[x],$$
i.e. $p$-cyclic coverings of the projective line ramified only at infinity. We show that when $f$ runs over monic polynomials with their exponents in a fixed $D\subset \N$, then the first slope of the generic Newton polygon is exactly the $p$-density of $D$. This provides a unified and simple method to treat the first slope question, at least in the generic case. This result also allows to prove some families of Artin Schreier curves of small genus to be supersingular in a straightforward way. 

\medskip

Note that in some sense these results give a generalisation of Stickelberger's theorem: when the set $D$ consists of one integer $d$, the $p$-density is the least valuation for a Gauss sum over an extension of $\F_p$ associated to a multiplicative character of order dividing $d$. This is coherent with the result about Artin Schreier curves, since for such $D$ we consider the curve $y^p-y=x^d$, and the slopes of the Newton polygon of the numerator of its zeta function are exactly the valuations of these Gauss sums.
 
 \medskip
 
We believe that the bounds given here have many other applications, to classical mathematical problems such as Waring problem over finite fields or Serre-Weil bounds, also in information theory where exponential sums are useful for estimating certain invariants of codes, boolean functions, etc... 

\medskip

The paper is organised as follows: in section 1 we define the $p$-density of a finite subset $D\subset \N^r$, and give some of its relevant properties. We give the main theorem on the valuation of exponential sums in section 2, and a Chevalley-Warning-Ax-Katz theorem. In the last section, we deal with Artin-Schreier curves.

\medskip

\section{$p$-density}

In this section, we fix a prime $p$, and a finite, nonempty subset $D=\{\d_i\}_{1\leq i\leq n}$, $\d_i=(d_{i1},\dots,d_{ir})$, of $\N^r\backslash\{0,\dots,0\}$. We assume that $D$ is not contained in any of the sets $x_j=0$, $1\leq j\leq r$; in this case we just have to lower the dimension $r$ to apply the results below to $D$.

For $n$ a non negative integer, we denote by $\sigma_p(n)$ the $p$-weight of $n$ : in other words, if $n=n_0+pn_1+\dots+p^tn_t$ with $0\leq n_i\leq p-1$, we have $\sigma_p(n)=n_0+\dots+n_t$.

\medskip

\subsection{Modular equations and the definition of $p$-density} Here we shall make precise the objects we work along the first section. We give some elementary properties, in order to define the notion of $p$-density.

\begin{definition}
Let $D$ be as above, and $m$ denote a positive integer. 

i/ We define $E_D(m)$ as the set of $n$-tuples $U=(u_1,\dots,u_n)\in \{0,\dots,p^m-1\}^n\backslash \{0,\dots,0\}$ such that
$$\sum_{i=1}^n u_i\d_i \equiv 0~[p^m-1],~\sum_{i=1}^n u_id_{ij}>0,~\mbox{\rm for all}~1\leq j\leq r.$$
For any $U \in E_D(m)$, we define the {\rm $p$-weight of $U$} as the integer $\sigma_p(U)= \sum_{i=1}^n \sigma_p(u_i)$, and the {\rm length of $U$} as $\ell(U)=m$.

ii/ Define $\sigma_p(D,m):=\min_{U\in E_D(m)} \sigma_p(U)$.

iii/ Let $\delta_m$ be the {\rm shift}, from the set $\{0,\dots,p^m-1\}$ to itself, which sends any integer $0\leq n\leq p^m-2$ to the residue of $pn$ modulo $p^m-1$, and $p^m-1$ to itself. We extend it coordinatewise to the set $\{0,\dots,p^m-1\}^n$.

iv/ We define a map
$$\begin{array}{ccccc}
\varphi & : & E_D(m) & \rightarrow & \N^r \\
        &   & U      & \mapsto & \frac{1}{p^m-1} \sum_{i=1}^n u_i\d_i\\
        \end{array}$$
        
v/ For any $U \in E_D(m)$, we set $\Phi(U):=\{\varphi(\delta_m^k(U)),~0,\leq k\leq m-1\}$.
\end{definition}  

\begin{remark} i/ Note that the set $E_D(m)$ is finite, thus the number $\sigma_p(D,m)$ is well defined.

ii/ The map $\delta_m$ shifts the $p$-digits of the integer $n$, whence its name. As a consequence, it preserves the $p$-weight. On the other hand we have the congruence $\delta(U)\equiv pU ~[p^m-1]$. Moreover we have $\delta_m^{m}=Id$ : the $m$-th iterate of the map $\delta_m$ is the identity. Finally for any $n\in \{0,\dots,p^m-1\}$ we have the equality
$$\sum_{k=0}^{m-1} \delta_m^k(n)=\frac{p^m-1}{p-1}\sigma_p(n).$$

iii/ We assume in the following that for any $1\leq i\leq n$ the prime $p$ does not divide $\d_i$. Actually if $\d_i=p\d_i'$, the map sending $U=(u_1,\dots,u_i,\dots,u_n)$ to $U'=(u_1,\dots,\delta_m(u_i),\dots,u_n)$ is a bijection from $E_D(m)$ to $E_{D'}(m)$, where $D'=(\d_1,\dots,\d_i',\dots,\d_n)$, and it preserves the weight.

iv/ We are working in the set $\N^r$ (resp. $\N^n$); in order to simplify the notations, we consider it as a subset of the $\Z$-module $\Z^r$ (resp. $\Z^n$), and note the laws as usual in these modules. 

\end{remark}

We begin by giving some easy properties of the maps we have just defined. We do not prove give a proof since they come readily from the definitions above.

\begin{lemma} 
\label{fp}
Let $D$, $m$ be as above. For any $1\leq j\leq r$, let $D_j=\sum_{i=1}^n d_{ij}$ be the total degree of $D$ along its $j$-th coordinate.

i/ The map $\delta$ sends $E_D(m)$ to itself, and preserves the $p$-weight.

ii/ For any $U=(u_1,\dots,u_n)\in \{0,\dots,p^m-1\}^n$, we have 
$$\sum_{k=0}^{m-1} \delta_m^k(U)=\frac{p^m-1}{p-1}(\sigma_p(u_1),\dots,\sigma_p(u_n)).$$

iii/ The image of $\varphi$ is contained in $\prod_{j=1}^r\{1,\dots,D_j\}$. 
\end{lemma}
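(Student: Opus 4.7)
The plan is to verify each of the three assertions directly from the definitions, with the cyclic-shift interpretation of $\delta_m$ on base-$p$ digits as the guiding idea.

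For (i), I would first check that $\delta_m$ preserves the $p$-weight on $\{0,\dots,p^m-1\}$: if $0 \leq u < p^m-1$ and $u = u_0 + u_1 p + \cdots + u_{m-1} p^{m-1}$ with $0 \leq u_i \leq p-1$, then using $p^m \equiv 1 \bmod (p^m-1)$ one sees $\delta_m(u) = u_{m-1} + u_0 p + \cdots + u_{m-2} p^{m-1}$, a cyclic shift of the digits. For $u=p^m-1$ the point is trivial. Summing over coordinates, $\delta_m$ preserves $\sigma_p$ on $\{0,\dots,p^m-1\}^n$. For stability of $E_D(m)$: from $\delta_m(u_i) \equiv p u_i \bmod (p^m-1)$ one gets $\sum_i \delta_m(u_i)\d_i \equiv p \sum_i u_i \d_i \equiv 0 \bmod (p^m-1)$. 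For the positivity conditions, since $\delta_m(u_i)=0$ iff $u_i=0$, any index $i$ with $u_i > 0$ and $d_{ij}>0$ contributes $\delta_m(u_i) d_{ij} > 0$, so $\sum_i \delta_m(u_i) d_{ij} > 0$ is preserved.

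For (ii), the cyclic-shift description makes this transparent. For $u<p^m-1$ the iterates $\delta_m^k(u)$ run through all $m$ cyclic rotations of the digit string $(u_0,\dots,u_{m-1})$, so each digit $u_i$ appears once in each of the positions $1,p,\dots,p^{m-1}$. Summing gives
$$\sum_{k=0}^{m-1}\delta_m^k(u) = \Bigl(\sum_{i=0}^{m-1}u_i\Bigr)(1+p+\cdots+p^{m-1}) = \frac{p^m-1}{p-1}\sigma_p(u).$$
For $u=p^m-1$ every $\delta_m^k(u)=p^m-1$ and $\sigma_p(u)=m(p-1)$, so both sides equal $m(p^m-1)$. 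The vector version follows coordinate by coordinate.

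For (iii), fix $1 \leq j \leq r$ and look at the $j$-th coordinate $\varphi(U)_j = (p^m-1)^{-1}\sum_i u_i d_{ij}$. The defining congruence $\sum_i u_i \d_i \equiv 0 \bmod (p^m-1)$ read componentwise shows that $\sum_i u_i d_{ij}$ is divisible by $p^m-1$, and the positivity condition in the definition of $E_D(m)$ forces $\sum_i u_i d_{ij} \geq p^m-1$, i.e.\ $\varphi(U)_j \geq 1$. For the upper bound, $u_i \leq p^m-1$ gives $\sum_i u_i d_{ij} \leq (p^m-1)\sum_i d_{ij} = (p^m-1)D_j$, hence $\varphi(U)_j \leq D_j$.

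There is no serious obstacle here: (ii) is the heart of the matter and amounts to the cyclic-shift observation, after which (i) and (iii) reduce to bookkeeping with the congruence $\delta_m(u) \equiv pu \bmod (p^m-1)$ and the coordinatewise reading of the defining conditions of $E_D(m)$.
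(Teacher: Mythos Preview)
Your proof is correct and proceeds exactly along the lines the paper intends: the paper omits the proof entirely, stating that the assertions ``come readily from the definitions above'' (in particular from the cyclic-shift description of $\delta_m$ and the congruence $\delta_m(U)\equiv pU\bmod(p^m-1)$ recorded in Remark~1.1(ii)). Your write-up simply spells out these routine verifications.
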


We now give some other consequences of the definition, which we shall use in order to define the $p$-density of the set $D$.

\begin{lemma} 

\label{sp}
Let $D$, $m$ be as above, and $U=(u_1,\dots,u_n)\in E_D(m)$. Choose an integer $1\leq t\leq m-1$, and for any $1\leq i\leq n$ let $u_i=p^t w_i+v_i$ be the euclidean division of $u_i$ by $p^t$.

i/ We have the equality 
$$\sum_{i=1}^{n} \sigma_p(u_i)\d_i=(p-1)\sum_{k=0}^{m-1} \varphi(\delta_m^k(U)).$$

ii/ For any $t$ as above, we have the equalities :
$$\sum_{i=1}^{n} v_i\d_i=p^t\varphi(\delta_m^{-t}(U))-\varphi(U)~;~\sum_{i=1}^{n} w_i\d_i=p^{m-t}\varphi(U)-\varphi(\delta_m^{-t}(U)).$$

\end{lemma}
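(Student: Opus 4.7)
Part i follows directly from Lemma \ref{fp} ii/. The plan is to pair both sides of the identity
$$\sum_{k=0}^{m-1} \delta_m^k(U)=\frac{p^m-1}{p-1}(\sigma_p(u_1),\dots,\sigma_p(u_n))$$
with the $n$-tuple $(\d_1,\dots,\d_n)$, i.e.\ take $\sum_{i=1}^n (\cdot)_i\d_i$ on both sides. The left side becomes $\sum_{k=0}^{m-1}\sum_{i=1}^n(\delta_m^k(U))_i\d_i=(p^m-1)\sum_{k=0}^{m-1}\varphi(\delta_m^k(U))$ by definition of $\varphi$ (note $\delta_m^k(U)\in E_D(m)$ by Lemma \ref{fp} i/). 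The right side is $\frac{p^m-1}{p-1}\sum_{i=1}^n\sigma_p(u_i)\d_i$, and cancelling $\frac{p^m-1}{p-1}$ gives the claim.

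For part ii/, the key observation is the digit-shift interpretation of $\delta_m^{-t}$. Writing $u_i=v_i+p^tw_i$ with $0\leq v_i<p^t$ just splits the $m$ base-$p$ digits of $u_i$ into the $t$ lower digits (giving $v_i$) and the $m-t$ higher digits (giving $w_i$). Since $\delta_m$ cyclically permutes digits, I expect
$$\delta_m^{-t}(u_i)=w_i+p^{m-t}v_i.$$
This is easily checked from $p^{m-t}(p^tw_i+v_i)=p^m w_i+p^{m-t}v_i\equiv w_i+p^{m-t}v_i\pmod{p^m-1}$, together with the bound $w_i+p^{m-t}v_i\leq p^m-1$; the boundary case $u_i=p^m-1$ is fixed by both sides.

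Once this is in hand, set $A:=\sum_{i=1}^n w_i\d_i$ and $B:=\sum_{i=1}^n v_i\d_i$. Applying $\varphi$ to $U$ and to $\delta_m^{-t}(U)$ yields the linear system
\begin{align*}
(p^m-1)\varphi(U)&=p^tA+B,\\
(p^m-1)\varphi(\delta_m^{-t}(U))&=A+p^{m-t}B.
\end{align*}
Multiplying the first equation by $p^{m-t}$ and subtracting the second gives $(p^m-1)A=(p^m-1)(p^{m-t}\varphi(U)-\varphi(\delta_m^{-t}(U)))$, i.e.\ the second stated identity. Symmetrically, multiplying the second equation by $p^t$ and subtracting the first yields $(p^m-1)B=(p^m-1)(p^t\varphi(\delta_m^{-t}(U))-\varphi(U))$, which is the first identity.

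The only nontrivial point is the digit-shift formula for $\delta_m^{-t}$; everything else is the purely formal manipulation above. Verifying the bound $w_i+p^{m-t}v_i\leq p^m-1$ (so the formula lands in the right set) and the fixed point $u_i=p^m-1$ requires a quick case check, but no real obstacle is expected.
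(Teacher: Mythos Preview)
Your proof is correct. Part i/ matches the paper's argument exactly.

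For part ii/ your route differs from the paper's. The paper first proves the single-digit identity $\sum_i u_{i,m-1}\d_i = p\varphi(U)-\varphi(\delta_m(U))$ (from $pu_i-\delta_m(u_i)=(p^m-1)u_{i,m-1}$), then writes $w_i=\sum_{k=t}^{m-1}u_{ik}p^{k-t}$ and sums the single-digit identities into a telescoping sum that collapses to $p^{m-t}\varphi(U)-\varphi(\delta_m^{-t}(U))$. You instead perform the cyclic shift in one block, observing $\delta_m^{-t}(u_i)=w_i+p^{m-t}v_i$, and then solve the resulting $2\times 2$ linear system for $\sum_i w_i\d_i$ and $\sum_i v_i\d_i$. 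Both arguments rest on the same digit-shift interpretation of $\delta_m$; yours is slightly more streamlined and symmetric in $v$ and $w$, while the paper's telescoping step yields as a byproduct the one-step relation between consecutive $\varphi(\delta_m^k(U))$, which can be handy elsewhere but is not needed here.
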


\begin{proof}

Part i/ is an easy consequence of the definitions : from lemma \ref{fp} ii/, we have $\sum_{i=1}^n \sum_{k=0}^{m-1} \delta_m^k(u_i)\d_i=\frac{q-1}{p-1}\sum_{i=1}^n \sigma_p(u_i)\d_i$. On the other hand, $\sum_{i=1}^n \delta_m^k(u_i)\d_i=(q-1)\varphi(\delta_m^k(U))$ from the definition of the map $\varphi$.

We come to part ii/. Let $u_i=\sum_{k=0}^{m-1} u_{ik} p^k$ where $0\leq u_{ik}\leq p-1$. For $U$ as above, and $0\leq k\leq m-1$, define $U_k:=(u_{1k},\dots,u_{nk})$. An easy calculation shows that for any $i$ we have $pu_i-\delta(u_i)=(q-1)u_{i,m-1}$, and 
$$(q-1)\left(p\varphi(U)-\varphi(\delta_m(U))\right)=p\sum_{i=1}^n u_i\d_i-\sum_{i=1}^n \delta(u_i)\d_i=(q-1)\sum_{i=1}^n u_{i,m-1}\d_i.$$
That is : $\sum_{i=1}^n u_{i,m-1}\d_i= p\varphi(U)-\varphi(\delta_m(U))$. Now from its definition, we have $w_i=\sum_{k=t}^{m-1} u_{ik}p^{k-t}$. Therefore we get
$$\begin{array}{ccl}
\sum_{i=1}^n w_i\d_i & = & \sum_{k=t}^{m-1} p^{k-t} \sum_{i=1}^n u_{ik}\d_i \\
& = & \sum_{k=t}^{m-1} p^{k-t}\left(p\varphi(\delta_m^{m-1-k}(U))-\varphi(\delta_m^{m-k}(U))\right)\\
& = & p^{m-t}\varphi(U)-\varphi(\delta_m^{-t}(U))\\
\end{array}$$
The proof for the $v_i$ follows the same lines.
\end{proof}

\medskip

We are ready to show the main result of this section 

\medskip

\begin{proposition}
\label{princ}
The set $\left\{\frac{\sigma_p(D,m)}{m}\right\}_{m\geq 1}$ has a minimum; this minimum is attained for at least one $m\leq \prod_{j=1}^r D_i$.
\end{proposition}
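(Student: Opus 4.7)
The plan is to establish the inequality $\sigma_p(D,m)/m \geq \mu$ for every $m\geq 1$, where I set $N:=\prod_{j=1}^{r}D_j$ and $\mu:=\min\{\sigma_p(D,m)/m : 1\leq m\leq N\}$. This minimum exists (finite set of rationals) and is, by definition, attained at some $m\leq N$; once I show it is a global lower bound it becomes the minimum of the infinite family, which is the proposition. The whole argument then proceeds by strong induction on $m$, the cases $m\leq N$ being trivial.

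The heart of the argument is a decomposition which, for $m>N$, produces from any minimising $U\in E_D(m)$ two shorter elements $V\in E_D(t)$ and $W\in E_D(m-t)$ with $\sigma_p(V)+\sigma_p(W)=\sigma_p(U)$. To locate the splitting index $t$ I use Lemma~\ref{fp} iii: the map $\varphi$ sends $E_D(m)$ into the finite set $\prod_j\{1,\ldots,D_j\}$ of cardinality $N$. Since $m>N$, the $m$ iterates $\varphi(\delta_m^k(U))$, $0\leq k\leq m-1$, cannot be pairwise distinct, and pigeonhole supplies $0\leq a<b\leq m-1$ with $\varphi(\delta_m^a(U))=\varphi(\delta_m^b(U))$. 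Replacing $U$ by $\delta_m^{b}(U)$, which preserves $E_D(m)$ and the $p$-weight by Lemma~\ref{fp} i, I may assume the coincidence $\varphi(\delta_m^{-t}(U))=\varphi(U)$ with $t:=b-a\in\{1,\ldots,m-1\}$. Feeding this coincidence into Lemma~\ref{sp} ii, with the euclidean divisions $u_i=p^t w_i+v_i$, collapses the two identities there to
$$\sum_i v_i\d_i=(p^t-1)\varphi(U),\qquad \sum_i w_i\d_i=(p^{m-t}-1)\varphi(U),$$
and splitting the $p$-adic expansion of each $u_i$ at its $t$-th digit gives $\sigma_p(U)=\sigma_p(V)+\sigma_p(W)$.

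With the decomposition in place the induction closes in one line: for $U$ minimising in $E_D(m)$,
$$\sigma_p(D,m)=\sigma_p(V)+\sigma_p(W)\geq \sigma_p(D,t)+\sigma_p(D,m-t)\geq \mu t+\mu(m-t)=\mu m,$$
using the inductive hypothesis at $t, m-t<m$. The step I expect to require the most care is confirming that $V$ and $W$ genuinely lie in $E_D(t)$ and $E_D(m-t)$: the coordinate ranges are immediate from the euclidean divisions, the modular congruences are visible from the displayed formulas, but the strict positivity of the coordinate sums (and hence the non-vanishing of $V$ and $W$) must be extracted by invoking once more the crucial fact, from Lemma~\ref{fp} iii, that $\varphi(U)\in\prod_j\{1,\ldots,D_j\}$ has every coordinate at least $1$.
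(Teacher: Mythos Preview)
Your proof is correct and follows essentially the same route as the paper's: the pigeonhole argument on the finite range of $\varphi$, the shift to normalise the coincidence, and the splitting via Lemma~\ref{sp}~ii are all identical in substance, with only cosmetic differences (you divide by $p^{t}$ and frame the recursion as strong induction, whereas the paper divides by $p^{m-t}$ and phrases it as an iteration). Your explicit check that $V$ and $W$ satisfy the positivity clause of $E_D$ via $\varphi(U)\in\prod_j\{1,\ldots,D_j\}$ is a point the paper leaves implicit.
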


\begin{proof}
Let $m>\prod_{j=1}^r D_i$ be an integer, and choose $U=(u_1,\dots,u_n)\in E_D(m)$ such that $\sigma_p(U)=\sigma_p(D,m)$. Recall that we have defined $\Phi(U):=\{\varphi(\delta_m^k(U))\}_{0\leq k\leq m-1}$. It is a subset of $Im(\varphi)$. From lemma \ref{fp} iii/, and from the pigeon hole principle, one can find integers $t_1< t_2$ in $\{0,\dots,m-1\}$ such that $\varphi(\delta_m^{t_1}(U))=\varphi(\delta_m^{t_2}(U))$. If we consider the tuple $\delta_m^{t_1}(U)$ instead of $U$ (they have the same $p$-weight), we get $0<t\leq m-1$ such that $\varphi(U)=\varphi(\delta_m^{t}(U))$.

For each $1\leq i\leq n$, let $u_i=p^{m-t}w_i+v_i$ be the result of the euclidean division of $u_i$ by $p^{m-t}$, set $V=(v_1,\dots,v_n)$, and $W=(w_1,\dots,w_n)$. From lemma \ref{sp} ii/ and the definition of $t$, we have
$$\sum_{i=1}^n v_i\d_i=(p^{m-t}-1)\varphi(U)~;~\sum_{i=1}^n w_i\d_i=(p^{t}-1)\varphi(U).$$
Thus $V\in E_D(m-t)$ and $W\in E_D(t)$. We deduce from their definitions that we have the inequalities $\sigma_p(V)\geq \sigma_p(D,m-t)$, and $\sigma_p(W)\geq \sigma_p(D,t)$. But for each $i$ we have $\sigma_p(u_i)=\sigma_p(v_i)+\sigma_p(w_i)$, and $\sigma_p(U)=\sigma_p(V)+\sigma_p(W)$. From the choice of $U$, we get $\sigma_p(D,m)=\sigma_p(V)+\sigma_p(W)\geq \sigma_p(D,m-t)+\sigma_p(D,t)$, that is
$$\frac{\sigma_p(D,m)}{m}\geq\left(1-\frac{t}{m}\right)\frac{\sigma_p(D,m-t)}{m-t}+\frac{t}{m}\frac{\sigma_p(D,t)}{t}.$$
Thus we get $\frac{\sigma_p(D,m)}{m}\geq \min(\frac{\sigma_p(D,m-t)}{m-t},\frac{\sigma_p(D,t)}{t})$. If $t$ or $m-t$ is greater than $\prod_{j=1}^r D_i$, we use the same process for $V$ or $W$. Thus we get :
$$\frac{\sigma_p(D,m)}{m}\geq \min_{t\leq \prod_{j=1}^r D_i}\left\{ \frac{\sigma_p(D,t)}{t}\right\},$$
and this is the desired result.
\end{proof}

We are ready to define the $p$-density of the set $D$.

\begin{definition}
i/ Let $D,p$ be as above. The {\rm $p$-density of the set $D$} is the rational number
$$\pi_p(D):= \frac{1}{p-1}\min_{m\geq 1}\left\{\frac{\sigma_p(D,m)}{m}\right\}.$$
ii/ The density of an element $U\in E_D(m)$ is $\pi(U):=\frac{\sigma_p(U)}{(p-1)m}$. The element $U$ is {\rm minimal} when $\pi(U)=\pi_p(D)$.
\end{definition}

\subsection{Properties of the $p$-density}

In this subsection we choose $D$ and  $p$ as above.

We come to the general case; we shall give some properties of the $p$-density of a finite subset of $D\subset \N^r$. First we need a definition

\begin{definition}
i/ Let $D\subset \N^r$; we define the {\rm $p$-weight of $D$} as the maximal $p$-weight of the coordinates of its elements, and denote it by $$\sigma_p(D):=\max\{\sigma_p(d_{ij}),~1\leq i\leq n, 1\leq j \leq r\}.$$

ii/ For any $1\leq i\leq n$ we denote by $\sigma_p(\d_i)$ the vector $(\sigma_p(d_{i1}),\dots,\sigma_p(d_{ir}))\in \N^r$. We denote by $\sigma_p(D)$ the set $\{\sigma_p(\d_1),\dots,\sigma_p(\d_n)\}$.
\end{definition}

Now we have :

\begin{lemma}
\label{pp1}
Let $D_1,D_2,D\subset \N^r$, and with none of their elements multiple of $p$.

i/ If $D_1\subset D_2$, then $\pi_p(D_1)\geq \pi_p(D_2)$.

ii/ Assume $D=\{\d_1,\dots,\d_n\}$. Let $\v(v_1,\dots,v_r) \in \R^r$ be such that for any $1\leq i \leq n$ the scalar product $\v\cdot\d_i\leq 1$. Then we have the inequality 
$$\pi_p(D)\geq \sum_{j=1}^r v_j.$$

iii/ Notations are as in ii/.  Let $\v(v_1,\dots,v_r) \in \R^r$ be such that for any $1\leq i \leq n$ the scalar product $\v\cdot\sigma_p(\d_i)\leq 1$. Then we have the inequality 
$$\pi_p(D)\geq \sum_{j=1}^r v_j.$$

iv/ We have the following inequality : $ \pi_p(D)\geq \frac{1}{\sigma_p(D)}$.
\end{lemma}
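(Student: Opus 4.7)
The strategy is to handle the four parts in increasing order of difficulty, with the real work concentrated in (iii) (from which (iv) drops out immediately). Part (i) is a formal inclusion: if $D_1 \subset D_2$, say $D_2 \setminus D_1 = \{\d_{n+1}, \ldots, \d_{n+k}\}$, then any $U = (u_1, \ldots, u_n) \in E_{D_1}(m)$ extends to $U' = (u_1, \ldots, u_n, 0, \ldots, 0) \in E_{D_2}(m)$, with the modular relation and the positivity conditions preserved and the $p$-weight unchanged. Hence $\sigma_p(D_2, m) \leq \sigma_p(D_1, m)$ for every $m$, and $\pi_p(D_2) \leq \pi_p(D_1)$.

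For (ii), I start from the identity of lemma \ref{sp} (i), $\sum_i \sigma_p(u_i) \d_i = (p-1) \sum_{k=0}^{m-1} \varphi(\delta_m^k(U))$, and take the scalar product with $\v$. The hypothesis $\v \cdot \d_i \leq 1$ combined with $\sigma_p(u_i) \geq 0$ gives $(p-1)\, \v \cdot \sum_k \varphi(\delta_m^k(U)) \leq \sigma_p(U)$. On the other hand, lemma \ref{fp} (iii) says each $\varphi(\delta_m^k(U))$ is coordinatewise $\geq (1, \ldots, 1)$; under the implicit assumption $v_j \geq 0$ (otherwise the conclusion is either trivial or follows from a restriction to nonnegative components), this gives $\v \cdot \varphi(\delta_m^k(U)) \geq \sum_j v_j$ for each $k$. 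Combining, $(p-1) m \sum_j v_j \leq \sigma_p(U)$, i.e.\ $\pi(U) \geq \sum_j v_j$. Minimizing over $U$ and $m$ yields $\pi_p(D) \geq \sum_j v_j$.

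The core of the work is (iii), and the crucial ingredient is the following elementary digit-sum claim: \emph{if $N$ is a positive integer divisible by $p^m - 1$, then $\sigma_p(N) \geq m(p-1)$}. To establish it, write $N$ in base $p^m$ as $N = \sum_k n_k p^{km}$ with $0 \leq n_k < p^m$; the chunks do not overlap in base $p$, so $\sigma_p(N) = \sum_k \sigma_p(n_k)$. Because $p^m \equiv 1 \pmod{p^m - 1}$, the sum $S := \sum_k n_k$ is again a positive multiple of $p^m - 1$, and subadditivity of $\sigma_p$ yields $\sigma_p(S) \leq \sigma_p(N)$. Iterating this $m$-chunk contraction produces a strictly decreasing sequence of positive multiples of $p^m - 1$ whose digit sums all lower-bound $\sigma_p(N)$; the sequence terminates at $p^m - 1$, with digit sum exactly $m(p-1)$. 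Now apply the claim to $\sum_i u_i d_{ij} = (p^m - 1) \varphi_j(U) > 0$: its digit sum is at least $m(p-1)$. Subadditivity of $\sigma_p$ together with the standard inequality $\sigma_p(ab) \leq \sigma_p(a) \sigma_p(b)$ (the pre-carry digit sum of $ab$ equals $\sigma_p(a) \sigma_p(b)$, and carries never increase digit sums) then gives $\sum_i \sigma_p(u_i) \sigma_p(d_{ij}) \geq m(p-1)$ for every $j$. Multiplying by $v_j \geq 0$, summing over $j$, and using $\v \cdot \sigma_p(\d_i) \leq 1$ produces $m(p-1) \sum_j v_j \leq \sigma_p(U)$, whence $\pi_p(D) \geq \sum_j v_j$.

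Part (iv) is (iii) specialized to the vector $\v$ with $v_{j_0} = 1/\sigma_p(D)$ and $v_j = 0$ for $j \neq j_0$, for any chosen coordinate $j_0$: by the definition of $\sigma_p(D)$, $\v \cdot \sigma_p(\d_i) = \sigma_p(d_{ij_0})/\sigma_p(D) \leq 1$, while $\sum_j v_j = 1/\sigma_p(D)$. The main obstacle in the whole lemma is the digit-sum claim inside (iii); once it is in hand, the remaining parts amount to bookkeeping with subadditivity and the hypotheses.
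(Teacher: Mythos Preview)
Your proof is correct and, for parts (iii) and (iv), essentially parallel to the paper's: the paper also reduces (iii) to the inequality $\sigma_p\bigl(a(p^m-1)\bigr) \geq m(p-1)$ combined with $\sigma_p\!\left(\sum_i u_i d_{ij}\right) \leq \sum_i \sigma_p(u_i)\sigma_p(d_{ij})$, but cites the former from \cite{mm2} rather than proving it as you do. For (iv) the paper takes $\v$ with all coordinates equal (to $1/(r\,\sigma_p(D))$, modulo a typo) rather than your single-coordinate vector; both choices work.

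The genuine difference is in (ii). The paper treats (ii) exactly like (iii), invoking the same cited digit-sum lemma together with the cruder bound $\sigma_p(u_i d_{ij}) \leq \sigma_p(u_i)\, d_{ij}$. Your route via Lemma~\ref{sp}~i/ and the fact (Lemma~\ref{fp}~iii/) that each $\varphi(\delta_m^k(U))$ has all coordinates $\geq 1$ is a bit cleaner: it avoids the digit-sum lemma entirely for this part and uses only the structural identities already established. Both arguments silently require $v_j \geq 0$ --- the paper's step $\sum_j v_j \sum_i \sigma_p(u_i) d_{ij} \geq m(p-1)\sum_j v_j$ needs it just as yours does --- so your flagging of that assumption is apt (though your remark that one may ``restrict to nonnegative components'' is not quite right, since zeroing a negative $v_j$ can push some $\v\cdot\d_i$ above $1$).
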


\begin{proof}
Part i/ follows from the definitions, once we have remarked that for any $m$, $E_{D_1}(m)\subset E_{D_2}(m)$. 

We come to assertion ii/. Let $U=(u_1,\dots,u_n)\in E_D(m)$. For any $1\leq j \leq r$, we get $\sum_{i=1}^n u_id_{ij}=a_j(q-1)$ for some non zero integer $a_j$. From \cite[Proposition 11 iv/]{mm2}, we have $\sigma_p(a(p^m-1))\geq m(p-1)$. Now we have for any $1\leq j \leq r$, the inequalities \begin{equation}
\label{majo}
\sigma_p(\sum_{i=1}^n u_id_{ij})\leq \sum_{i=1}^n \sigma_p(u_id_{ij}) \leq \sum_{i=1}^n \sigma_p(u_i)d_{ij}.
\end{equation}
Thus
$$ \sigma_p(U)=\sum_{i=1}^n \sigma_p(u_i) \geq \sum_{i=1}^n \sigma_p(u_i) \sum_{j=1}^r v_jd_{ij} \geq \sum_{j=1}^r v_j \sum_{i=1}^n \sigma_p(u_i)d_{ij}\geq m(p-1) \sum_{j=1}^r v_j.$$
This is the desired result. Part iii/ can be proven the same way, replacing the inequality \ref{majo} by 
$$\sigma_p(\sum_{i=1}^n u_id_{ij})\leq \sum_{i=1}^n \sigma_p(u_i)\sigma_p(d_{ij}).$$

Assertion iv/ follows from assertion iii/ by taking $\v$ the vector with all coordinates equal to $\frac{1}{d\sigma_p(D)}$.
\end{proof}

We now give a lower bound on $\pi_p(D)$ depending on the convex hull of $D$, which appears in the work of Adolphson and Sperber ({\it cf.} \cite[page 546]{as}) 

\begin{definition}
Let $D\subset \N^r$ as above. Denote by $\Delta(D)$ the convex hull of the points in $D$ and the origin in $\R^r$. We define $\omega(D)$ to be the smallest number such that $\omega(D)\Delta(D)$, the dilation of the polytope $\Delta(D)$ by the factor $\omega(D)$, contains a lattice point with all coordinates positive.
\end{definition}

\begin{proposition}
\label{omega}
We have the inequality $\pi_p(D)\geq \omega(D)$.
\end{proposition}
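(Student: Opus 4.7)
The plan is to establish, for every $m \geq 1$ and every $U \in E_D(m)$, the inequality
\[
\omega(D) \;\leq\; \frac{\sigma_p(U)}{m(p-1)};
\]
taking the minimum over $U$ and then over $m$ then yields $\omega(D) \leq \pi_p(D)$.

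To prove this I will average over the $\delta_m$-orbit of $U$. For $0 \leq k \leq m-1$, set $w_k := \varphi(\delta_m^k(U))$. By Lemma \ref{fp} i/, $\delta_m^k(U) = (u_1^{(k)}, \dots, u_n^{(k)})$ still lies in $E_D(m)$, so $w_k$ is a lattice point with strictly positive integer coordinates, and the expression
\[
w_k \;=\; \sum_{i=1}^n \frac{u_i^{(k)}}{p^m-1}\,\d_i
\]
exhibits $w_k$ as a non-negative combination of the $\d_i$'s with total weight $c_k := \sum_{i=1}^n u_i^{(k)}/(p^m-1)$. Hence $w_k \in c_k\,\Delta(D)$, and by the very definition of $\omega(D)$ this gives $\omega(D) \leq c_k$ for each $k$.

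The last step is to sum these inequalities over $k$ and evaluate. Exchanging the order of summation and applying Lemma \ref{fp} ii/ coordinatewise, I get $\sum_{k=0}^{m-1} u_i^{(k)} = \frac{p^m-1}{p-1}\sigma_p(u_i)$ for each $i$; the factors $p^m-1$ cancel and one obtains
\[
\sum_{k=0}^{m-1} c_k \;=\; \frac{\sigma_p(U)}{p-1}.
\]
Dividing the bound $m\,\omega(D) \leq \sum_k c_k$ by $m$ yields the displayed inequality.

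I do not expect a serious obstacle: the identity in Lemma \ref{fp} ii/ is exactly what makes the averaging argument work, and every other ingredient is immediate from the defining conditions of $E_D(m)$ and of $\omega(D)$. A tempting alternative would be to deduce the proposition directly from the LP-style Lemma \ref{pp1} ii/ by finding a good vector $\v$ with $\v\cdot\d_i\leq 1$; but a close look at the proof of that lemma shows that it implicitly requires $v_j \geq 0$, and thus only delivers $\pi_p(D) \geq \omega(\mathbf{1})$, which is in general strictly larger than $\omega(D) = \min_{w \in \mathbb{Z}_{>0}^r}\omega(w)$. Averaging over the $\delta_m$-orbit of $U$ sidesteps this issue by producing the correct positive lattice point directly from the datum $U$.
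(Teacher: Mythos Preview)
Your argument is correct and essentially identical to the paper's own proof: both observe that each $\varphi(\delta_m^k(U))$ is a positive lattice point lying in $c_k\,\Delta(D)$ with $c_k=\sum_i u_i^{(k)}/(p^m-1)$, hence $c_k\geq\omega(D)$, and then sum over $k$ using Lemma~\ref{fp}~ii/ to convert $\sum_k c_k$ into $\sigma_p(U)/(p-1)$. The only difference is cosmetic phrasing; your closing remark about Lemma~\ref{pp1}~ii/ is extra commentary not present in the paper but does not affect the proof.
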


\begin{proof}
Let $U\in E_D(m)$. Then $\varphi(U)=\frac{1}{p^m-1} \sum_{i=1}^n u_i\d_i$ is a lattice point with all coordinates positive. From the definition of $\omega(D)$, we must have $\sum_{i=1}^n u_i\geq (p^m-1)\omega(D)$. In the same way, we get for any $0\leq k\leq m-1$ $\sum_{i=1}^n \delta_m^k(u_i)\geq (p^m-1)\omega(D)$. Summing over $k$ we get $\sum_{k=0}^{m-1} \sum_{i=1}^n \delta_m^k(u_i)\geq m(p^m-1)\omega(D)$. Now from Lemma \ref{fp} ii/ we have $\sum_{k=0}^{m-1} \delta_m^k(u_i) =\frac{p^m-1}{p-1}\sigma_p(u_i)$, and finally we obtain $\sum_{i=1}^n \sigma_p(u_i)\geq m(p-1)\omega(D)$, that is $\pi(U)\geq \omega(D)$. This is what we claimed. 
\end{proof}

We end this list of general properties giving a lower bound

\begin{lemma}
\label{subsys}
Let $T$ be a non empty subset of $\{1,\dots,r\}$, with cardinality $t$. We denote by $D_T$ the following projection of $D$ on the variables with indices in $T$
$$D_T:=\{\d_i',~ 1\leq i\leq n\},~ \d_i':=(d_{ij})_{j\in T}.$$
We have the inequality $\pi_p(D)\geq \pi_p(D_T)$.
\end{lemma}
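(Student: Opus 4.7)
My plan is to prove the componentwise bound $\sigma_p(D_T,m)\leq\sigma_p(D,m)$ for every $m\geq 1$; dividing by $m(p-1)$ and passing to minima will then give the claimed inequality of $p$-densities.

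To this end, I will fix $m$ and a minimizer $U=(u_1,\ldots,u_n)\in E_D(m)$, and construct from it an element $U'\in E_{D_T}(m)$ of no greater $p$-weight. Enumerate the distinct nonzero projections as $D_T=\{\d''_1,\ldots,\d''_s\}$ and partition the indices $\{1,\ldots,n\}=I_0\sqcup I_1\sqcup\cdots\sqcup I_s$ by $I_0=\{i:\d'_i=0\}$ and $I_j=\{i:\d'_i=\d''_j\}$. For each $1\leq j\leq s$ I put $w_j:=\sum_{i\in I_j}u_i$ and let $u'_j\in\{0,\ldots,p^m-1\}$ be the reduction of $w_j$ modulo $p^m-1$, with the convention that $u'_j=p^m-1$ exactly when $w_j$ is a positive multiple of $p^m-1$. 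The candidate is $U':=(u'_1,\ldots,u'_s)$.

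There are then three things to verify. First, the modular condition $\sum_j u'_j\d''_j\equiv 0\pmod{p^m-1}$ is inherited from $U\in E_D(m)$ by projection onto the coordinates of $T$ and subsequent regrouping. Second, for positivity on a coordinate $l\in T$, one uses that the $l$-th coordinate of $\sum_i u_i\d_i$ is strictly positive, so some $w_j>0$ with $d''_{j,l}>0$; by construction $u'_j>0$ whenever $w_j>0$. Third, for the weight bound, subadditivity of $\sigma_p$ gives $\sigma_p(w_j)\leq\sum_{i\in I_j}\sigma_p(u_i)$, and I must compare $\sigma_p(u'_j)$ to $\sigma_p(w_j)$: when $w_j$ is a positive multiple of $p^m-1$, the equality $\sigma_p(u'_j)=m(p-1)\leq\sigma_p(w_j)$ follows from \cite[Proposition 11 iv]{mm2}; otherwise one invokes $p^m\equiv 1\pmod{p^m-1}$ to fold high-order base-$p$ digits of $w_j$ back into positions $0,\ldots,m-1$, after which carries only decrease the digit sum. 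Summing over $j$ and discarding the indices in $I_0$ yields $\sigma_p(U')\leq\sigma_p(U)$, as required.

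The main obstacle is the last step: controlling how $\sigma_p$ behaves under reduction modulo $p^m-1$, namely the elementary but essential fact that $\sigma_p(N\bmod(p^m-1))\leq\sigma_p(N)$ for every $N\geq 0$. Once this digit-sum lemma is available, the rest of the argument is routine bookkeeping for combining the coefficients associated to colliding projections while respecting all the constraints that define $E_{D_T}(m)$.
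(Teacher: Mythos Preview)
Your argument is correct, but it is considerably more elaborate than what the paper does. The paper's proof is a one-liner: with $S=\{1,\dots,r\}\setminus T$, one has $E_D(m)=E_{D_T}(m)\cap E_{D_S}(m)$, because the defining congruences and positivity constraints of $E_D(m)$ split coordinate by coordinate into those for $T$ and those for $S$. In particular $E_D(m)\subset E_{D_T}(m)$ as subsets of $\{0,\dots,p^m-1\}^n$, so $\sigma_p(D,m)\geq\sigma_p(D_T,m)$ immediately, with no need to combine coefficients or invoke the digit-sum inequality for reduction modulo $p^m-1$.

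The difference stems from how you read the definition of $D_T$. The paper treats $D$ (and hence $D_T$) as an indexed family $(\d_i)_{1\leq i\leq n}$ rather than as a set with duplicates removed---see for instance Remark~1.1~iii/, where $D'=(\d_1,\dots,\d_i',\dots,\d_n)$ is written as a tuple. Under that reading $E_{D_T}(m)$ still consists of $n$-tuples, the intersection makes literal sense, and the lemma is trivial. You instead took $D_T$ to be the set of \emph{distinct} nonzero projections, which forces you to merge the $u_i$ sharing a common projection; that is why you need subadditivity of $\sigma_p$, the folding argument for $\sigma_p(N\bmod(p^m-1))\leq\sigma_p(N)$, and the special convention when $w_j$ is a positive multiple of $p^m-1$. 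All of this is sound, and your handling of the edge cases (zero projections in $I_0$, positivity of $U'$) is careful, but none of it is needed once one keeps the index set fixed.
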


\begin{proof}
We just remark that if $S$ is $\{1,\dots,r\}\backslash T$, then for each $m\geq 1$ we have $E_D(m)=E_{D_T}(m)\cap E_{D_S}(m)$ since this corresponds to cutting the system of modular equations in two subsystems. Now the lemma follows readily from the definitions.
\end{proof}

\subsection{The case $r=1$}
We begin by considering the case $D=\{d\}$, where $d>1$ is an integer prime to $p$ (from Remark 1.1).

\begin{definition}
For any rational $x\in \Q$, we denote by $<x>=x-[x]$ its fractional part. Let $\ell$ be the order of $p$ in the multiplicative group $(\Z/d\Z)^\times$. For any integer $1\leq a\leq d-1$, we define
$$\tau_d(a)=\frac{1}{\ell}\sum_{i=0}^{\ell-1} <\frac{ap^i}{d}>.$$
\end{definition}

\begin{remark}
The rational number $\tau(a)$ is well known. Let $m$ be an integer such that $p^m\equiv 1 ~[d]$. If $\omega$ denotes the Teichm\"uller character of the finite field $\F_{p^m}$, then from Stickelberger's theorem, $\tau(a)$ is the $p^m$-valuation of the Gauss sum over $\F_{p^m}$ associated to the character $\omega^{-a\frac{p^m-1}{d}}$.
\end{remark}

\begin{proposition}
Assume $D= \{d\}$, $d\neq 1$. We have the equality 
$$\pi_p(D)=\min_{1\leq a \leq d-1} \{\tau_d(a)\}.$$
\end{proposition}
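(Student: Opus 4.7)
The plan is to parametrize $E_D(m)$ explicitly and recognise each density $\pi(u)$ as some $\tau_d(a)$. When $D=\{d\}$, an element $U\in E_D(m)$ is a single integer $u\in\{1,\dots,p^m-1\}$ with $(p^m-1)\mid ud$. Writing $ud=a(p^m-1)$, the bound on $u$ forces $a\in\{1,\dots,d\}$; the case $a=d$ gives $u=p^m-1$, which has density $1$ and can be ignored because each $\tau_d(a)$ with $1\leq a\leq d-1$ is strictly less than $1$ (every term $<ap^i/d>$ lies in $[1/d,(d-1)/d]$, since $\gcd(p,d)=1$). For $1\leq a\leq d-1$ I set $d_a:=d/\gcd(a,d)$ and let $\ell_a$ be the order of $p$ modulo $d_a$; integrality of $u=a(p^m-1)/d$ is then equivalent to $d_a\mid p^m-1$, hence to $\ell_a\mid m$.

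Next I describe the shift. Since $u\neq p^m-1$, one has $\delta_m^k(u)=p^ku\bmod(p^m-1)$, and a direct computation using $ud=a(p^m-1)$ gives
$$\delta_m^k(u)=<ap^k/d>\,(p^m-1).$$
Summing over $k$ and invoking Lemma \ref{fp} ii/ yields
$$\sigma_p(u)=(p-1)\sum_{k=0}^{m-1}<ap^k/d>,\qquad \pi(u)=\frac{1}{m}\sum_{k=0}^{m-1}<ap^k/d>.$$

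Finally I use periodicity. The sequence $k\mapsto <ap^k/d>$ has period $\ell_a$, which divides both $\ell$ and $m$; hence
$$\frac{1}{m}\sum_{k=0}^{m-1}<ap^k/d>=\frac{1}{\ell}\sum_{k=0}^{\ell-1}<ap^k/d>=\tau_d(a),$$
i.e. $\pi(u)=\tau_d(a)$. This proves $\pi_p(D)\geq \min_{1\leq a\leq d-1}\tau_d(a)$. For the reverse inequality, take $m=\ell$: then $\ell_a\mid m$ for every $a$, so $u_a:=a(p^m-1)/d$ is a valid element of $E_D(\ell)$ with $\pi(u_a)=\tau_d(a)$, and minimising over $a$ yields $\pi_p(D)\leq \min_a\tau_d(a)$.

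The only non-routine ingredient is the explicit shift identity $\delta_m^k(u)=<ap^k/d>\,(p^m-1)$ together with the compatibility between the period $\ell_a$ and the requirement $\ell_a\mid m$ for integrality; once these are in place, the proposition follows immediately from Lemma \ref{fp} ii/.
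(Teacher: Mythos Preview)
Your argument is correct and follows essentially the same route as the paper: parametrize $E_D(m)$ by the integer $a$ with $ud=a(p^m-1)$, express $\sigma_p(u)$ as $(p-1)\sum_{k}<ap^k/d>$, and identify this average with $\tau_d(a)$ via periodicity. The one noteworthy difference is that you derive the digit-sum formula internally from the shift identity $\delta_m^k(u)=<ap^k/d>(p^m-1)$ together with Lemma~\ref{fp}~ii/, whereas the paper quotes the same formula from Berndt--Evans--Williams; your version is thus slightly more self-contained, and you are also more explicit than the paper about the reverse inequality (taking $m=\ell$ so that every $a\in\{1,\dots,d-1\}$ is realised).
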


\begin{proof}
Let $u\in E_D(m)$. We have $ud=a(p^m-1)$. If we set $d_m:=\gcd(d,p^m-1)$, we must have $u=a'(p^m-1)/d_m$ and $a=a'd/d_m$ for some $a'\in\{1,\dots,d_m-1\}$. Now from \cite[Theorem 11.2.7]{bew}, we have 
$$\sigma_p(u)=(p-1)\sum_{i=0}^{m-1} <\frac{a'p^i}{d_m}>.$$ 
If $\ell_m$ denotes the order of $p$ in the multiplicative group $(\Z/d_m\Z)^\times$, we must have $\ell_m|m$, and we get $\sigma_p(u)=(p-1)\frac{m}{\ell_m}\sum_{i=0}^{\ell_m-1} <\frac{a'p^i}{d_m}>$. On the other hand we have $\ell_m|\ell$, and we get
$$\sigma_p(u)=(p-1)\frac{m}{\ell}\sum_{i=0}^{\ell-1} <\frac{a'p^i}{d_m}>=(p-1)\frac{m}{\ell}\sum_{i=0}^{\ell-1} <\frac{ap^i}{d}>=m(p-1)\tau_d(a).$$
The result follows from the definition of $\pi_p(D)$.
\end{proof}

We have the following symmetry: $\tau_d(a)+\tau_d(d-a)=1$. Thus for any $a$ one of these numbers is less than $\frac{1}{2}$. From Lemma \ref{pp1} we get bounds in the one dimensional case

\begin{corollary}
\label{demi}
Assume $D\subset \N\backslash\{0\}$, such that $D\neq \{1\}$ and $D$ contains no multiple of $p$; we have the inequality $\frac{1}{\sigma_p(D)}\leq \pi_p(D)\leq \frac{1}{2}$.
\end{corollary}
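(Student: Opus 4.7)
The lower bound $\frac{1}{\sigma_p(D)} \le \pi_p(D)$ is exactly Lemma \ref{pp1} iv/, so nothing new is needed there. The real content is the upper bound $\pi_p(D) \le \frac{1}{2}$, and the strategy is to reduce to a singleton.

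Since $D \subset \N \setminus \{0\}$ and $D \neq \{1\}$, there exists some element $d \in D$ with $d \geq 2$; by hypothesis this $d$ is coprime to $p$. Applying Lemma \ref{pp1} i/ to $\{d\} \subset D$ yields $\pi_p(D) \le \pi_p(\{d\})$. By the Proposition in the preceding subsection, we have
$$\pi_p(\{d\}) = \min_{1 \le a \le d-1} \tau_d(a),$$
so it suffices to exhibit some $a \in \{1,\ldots,d-1\}$ with $\tau_d(a) \le \frac{1}{2}$.

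Here I invoke the symmetry $\tau_d(a) + \tau_d(d-a) = 1$ stated just above the corollary. Picking $a=1$, one of the two numbers $\tau_d(1), \tau_d(d-1)$ must be at most $\frac{1}{2}$; hence the minimum of $\tau_d$ over $\{1,\ldots,d-1\}$ is at most $\frac{1}{2}$, and we conclude $\pi_p(D) \le \pi_p(\{d\}) \le \frac{1}{2}$.

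There is no real obstacle: the proof is a two-line combination of monotonicity (Lemma \ref{pp1} i/), the explicit formula in the case $r=1$, and the symmetry of the Stickelberger fractions. The only point that requires a moment's care is to ensure that a $d \ge 2$ in $D$ really exists, which follows immediately from $D \subset \N \setminus \{0\}$ together with $D \neq \{1\}$.
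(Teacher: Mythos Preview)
Your proof is correct and follows essentially the same route as the paper: the lower bound is Lemma~\ref{pp1}~iv/, and the upper bound is obtained by passing to a singleton $\{d\}\subset D$ with $d\geq 2$ via Lemma~\ref{pp1}~i/, then using the Proposition together with the symmetry $\tau_d(a)+\tau_d(d-a)=1$ to bound $\pi_p(\{d\})\leq \tfrac12$. There is nothing to add.
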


 \begin{remark}
 When $D=\{d\}$ and $p$ is semi primitive modulo $d$, i.e. when some power of $p$ is $-1$ modulo $d$, the right inequality in the above corollary is an equality.
 \end{remark}
   
We end this section with a practical result for the numerical determination of the density.

\begin{lemma}
\label{length}
Let $U\in E_D(m)$ with density $\pi$, such that $\Phi(U)$ contains exactly $m$ elements. If $d=\max D$, we have
$$m\leq 2d\pi -1.$$
\end{lemma}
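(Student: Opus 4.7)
The plan is to combine the key identity in Lemma \ref{sp}~i/ with the fact that in the one-dimensional setting the values $\varphi(\delta_m^k(U))$ are positive integers, so $m$ distinct ones must fill up at least $\{1,2,\dots,m\}$.

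First, I would apply Lemma \ref{sp}~i/ to $U=(u_1,\dots,u_n)\in E_D(m)$, which gives
$$\sum_{i=1}^n \sigma_p(u_i)\,d_i \;=\; (p-1)\sum_{k=0}^{m-1} \varphi(\delta_m^k(U)).$$
Using $d_i\leq d=\max D$ on the left and the definition $\sigma_p(U)=(p-1)m\pi$ of the density, this yields
$$(p-1)\sum_{k=0}^{m-1} \varphi(\delta_m^k(U)) \;\leq\; d\,\sigma_p(U) \;=\; d(p-1)m\pi,$$
hence $\sum_{k=0}^{m-1}\varphi(\delta_m^k(U)) \leq d\,m\,\pi$.

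Next, I would exploit the hypothesis $|\Phi(U)|=m$ together with the fact that in the case $r=1$ each $\varphi(\delta_m^k(U))$ lies in $\N\setminus\{0\}$ (by Lemma \ref{fp}~iii/ and the positivity condition defining $E_D(m)$). The $m$ values $\varphi(\delta_m^k(U))$, $0\leq k\leq m-1$, are then $m$ \emph{distinct} positive integers, so
$$\sum_{k=0}^{m-1}\varphi(\delta_m^k(U)) \;\geq\; 1+2+\cdots+m \;=\; \frac{m(m+1)}{2}.$$
Combining the two estimates gives $m(m+1)/2 \leq d\,m\,\pi$, which, after dividing by $m$, is exactly the claimed inequality $m\leq 2d\pi-1$.

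There is no real obstacle here: the only step that needs a moment of care is justifying that the $\varphi(\delta_m^k(U))$ are positive integers, but this follows directly from the definitions together with the assumption in Section~1.3 that $p\nmid d_i$ (so $\varphi$ lands in $\N$) and from the condition $\sum u_i d_i>0$ in the definition of $E_D(m)$ (so the values are nonzero).
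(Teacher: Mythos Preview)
Your proof is correct and follows essentially the same route as the paper: apply Lemma~\ref{sp}~i/, bound the left side by $d\,\sigma_p(U)=d(p-1)m\pi$, bound the right side below by $(p-1)\cdot m(m+1)/2$ using that the $m$ values $\varphi(\delta_m^k(U))$ are distinct positive integers, and combine. The only minor remark is that the integrality of $\varphi(U)$ comes directly from the congruence in the definition of $E_D(m)$ (no need to invoke $p\nmid d_i$), and positivity is already contained in Lemma~\ref{fp}~iii/.
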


\begin{proof}
From Lemma \ref{sp} i/, we have 
$$\sum_{i=1}^{n} \sigma_p(u_i)d_i=(p-1)\sum_{k=0}^{m-1} \varphi(\delta_m^k(U)).$$ 
The numbers $\varphi(\delta_m^k(U))$ are the elements of $\Phi(U)$. They are positive and pairwise distinct. Thus the right hand side of the inequality is greater than or equal to $\sum_{k=1}^{m} k=m(m+1)/2$.

On the other hand, we get $\sum_{i=1}^{n} \sigma_p(u_i)d_i\leq d\sum_{i=1}^{n} \sigma_p(u_i)=d\sigma_p(U)=dm(p-1)\pi$. Combining these inequalities gives the desired result.
\end{proof}

We end this section with an example that we shall use in the last section.

\begin{lemma}
\label{calc}
Fix a prime $p$ and let $D$ be the set of prime to $p$ integers in $\{1,\dots,d\}$. For any $p^n-1\leq d< 2p^n-1$, we have $\pi_p(D)=\frac{1}{n(p-1)}$.
\end{lemma}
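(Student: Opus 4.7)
The plan is to sandwich $\pi_p(D)$ between matching upper and lower bounds, both derived from elementary base-$p$ digit considerations. For the upper bound, I would exhibit an explicit element of $E_D(n)$ of density $\frac{1}{n(p-1)}$; for the lower bound, I would invoke Lemma \ref{pp1} iv/ after bounding $\sigma_p(D)$ from above.

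For the upper bound, the hypothesis $d \geq p^n - 1$ together with $\gcd(p^n - 1, p) = 1$ forces $p^n - 1 \in D$. Let $U$ be the tuple whose coordinate at the index of $p^n - 1$ equals $1$ and all other coordinates vanish. Then $\sum_i u_i d_i = p^n - 1 \equiv 0 \m (p^n - 1)$ and the sum is positive, so $U \in E_D(n)$. Since $\sigma_p(U) = \sigma_p(1) = 1$, this gives $\pi(U) = \frac{1}{n(p-1)}$, and the definition of the $p$-density immediately yields $\pi_p(D) \leq \frac{1}{n(p-1)}$.

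For the matching inequality $\pi_p(D) \geq \frac{1}{n(p-1)}$, I would apply Lemma \ref{pp1} iv/, reducing the task to checking $\sigma_p(d_i) \leq n(p-1)$ for every $d_i \in D$. Writing $d_i$ in base $p$, the bound $d_i \leq d < 2p^n - 1$ forces the coefficient of $p^n$ to be $0$ or $1$. If it is $0$, then $d_i < p^n$ has at most $n$ digits each at most $p-1$, so $\sigma_p(d_i) \leq n(p-1)$. If it is $1$, write $d_i = p^n + r$ with $0 \leq r \leq p^n - 2$; since $p^n - 1$ is the unique integer strictly below $p^n$ whose digit sum attains the maximum value $n(p-1)$, we have $\sigma_p(r) \leq n(p-1) - 1$, whence $\sigma_p(d_i) \leq n(p-1)$ again.

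This short digit analysis is the only non-routine step, and it is also the reason the stated range for $d$ is the natural one: the explicit witness $U$ requires $d \geq p^n - 1$, while the bound $\sigma_p(D) \leq n(p-1)$ breaks as soon as $d$ reaches $2p^n - 1$ (whose digit sum is $1 + n(p-1)$). Combining the two bounds completes the proof.
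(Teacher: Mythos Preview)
Your proof is correct and follows essentially the same approach as the paper's: both sandwich $\pi_p(D)$ using the $\sigma_p(D)$ lower bound (the paper via Corollary~\ref{demi}, you via the equivalent Lemma~\ref{pp1}~iv/) and an explicit witness coming from the element $p^n-1\in D$ (your witness is the $i=1$ case of the paper's family $(1+p^n+\dots+p^{(i-1)n})\cdot(p^n-1)\equiv 0~[p^{in}-1]$). Your digit-sum case analysis for $\sigma_p(D)\leq n(p-1)$ is slightly more explicit than the paper's one-line observation that $2p^n-1$ is the least integer of $p$-weight $n(p-1)+1$, but the content is identical.
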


\begin{proof}
First note that $p^n-1$ is the least integer $k$ such that $\sigma_p(k)=n(p-1)$, and $2p^n-1$ is the least integer $k$ such that $\sigma_p(k)=n(p-1)+1$. Thus $\sigma_p(D)=n(p-1)$, and from Corollary \ref{demi}, we have $\pi_p(D)\geq \frac{1}{n(p-1)}$.
On the other hand for any $i\geq 1$ we have $(1+p^n+\dots+p^{(i-1)n})(p^n-1)\equiv 0$ mod $p^{in}-1$. Since $\sigma_p(1+p^n+\dots+p^{(i-1)n})=i$, we get $\pi_p(D)\leq \frac{1}{n(p-1)}$. This shows the assertion.
\end{proof}

\medskip

\section{Valuation of exponential sums}

In this section we fix a polynomial $f\in \overline{\F}_p[x_1,\dots,x_r]$ such that all the variables actually appear in $f$ (else we can reduce to lower dimension), and we set $D:=D(f)$ to be the set of exponents of $f$ in $\N^r$.

Assume $f\in \F_q[x_1,\dots,x_r]$. Let $\psi$ denote a non trivial additive character of $\F_q$. The exponential sum associated to $f$ over $\F_q$ is the sum
$$S_q(f):=\sum_{(x_1,\dots,x_r)\in \ma{F}_q^r} \psi(f(x_1,\dots,x_r)).$$

Our main result is the following estimate for the $q$-adic valuation of the sum $S(f)$

\begin{theorem}
\label{vse}
Let $p$, $f$, $D$ be as above. If $v_q$ denotes the $q$-adic valuation, we have the inequality
$$v_q(S_q(f))\geq \pi_p(D).$$
Moreover this inequality is optimal in the sense that for $p$ and $D$ fixed, there exists a power $q$ of $p$ and a polynomial $f$ over $\F_q$ with its exponents in $D$ such that it is an equality.
\end{theorem}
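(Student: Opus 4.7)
The plan is to combine a standard Gauss sum expansion of $S_q(f)$ (as in \cite{mm2}) with Proposition~\ref{princ}; the density $\pi_p(D)$ is essentially the universal (i.e.\ $m$-independent) repackaging of the Moreno et al. modular-system bound.

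\textbf{Stage 1 (the inequality).} Fix $q=p^m$ and write $f=\sum_{i=1}^n c_i x^{\d_i}$ with $c_i\in\F_q^*$. On the open torus $(\F_q^*)^r$, I apply the Gauss-sum inversion $\psi(y)=\frac{1}{q-1}\sum_\chi G(\chi^{-1})\chi(y)$ to each factor $\psi(c_i x^{\d_i})$, interchange summations, and use the orthogonality relation $\sum_{x_j\in \F_q^*}\chi(x_j)=(q-1)$ if $\chi=1$ and $0$ otherwise. Writing $\chi_i=\omega^{u_i}$ with $\omega$ the Teichm\"uller character of $\F_q$, the surviving tuples are exactly those satisfying $\sum_i u_i d_{ij}\equiv 0\pmod{q-1}$ for each $j$, i.e.\ the nondegenerate ones lie in $E_D(m)$. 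Stickelberger's theorem gives $v_q(G(\omega^{-u}))=\sigma_p(u)/(m(p-1))$, so each $U$-term has $q$-adic valuation at least $\sigma_p(U)/(m(p-1))$. The strata with some $x_j=0$ are treated by induction on $r$ combined with Lemma~\ref{subsys}. Taking minima,
$$v_q(S_q(f))\geq \frac{\sigma_p(D,m)}{m(p-1)}\geq \frac{1}{p-1}\min_{m'\geq 1}\frac{\sigma_p(D,m')}{m'}=\pi_p(D).$$

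\textbf{Stage 2 (optimality).} By Proposition~\ref{princ}, the minimum defining $\pi_p(D)$ is attained at some $m_0\leq \prod_j D_j$; pick a minimal element $U_0=(u_1^0,\dots,u_n^0)\in E_D(m_0)$, so $\sigma_p(U_0)=m_0(p-1)\pi_p(D)$. Set $q_0=p^{m_0}$ and consider $f_0=\sum_i c_i x^{\d_i}$ over $\F_{q_0}$ with coefficients still to be chosen. In the Gauss-sum expansion at $q=q_0$, the term indexed by $U_0$ has $q_0$-adic valuation exactly $\pi_p(D)$ by Stickelberger, while every other surviving term (either indexed by $U\in E_D(m_0)$ with $\sigma_p(U)>\sigma_p(U_0)$, or arising from a lower-dimensional stratum) has strictly larger valuation. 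Choosing the $c_i$ generically in $\F_{q_0}^*$, or, if the Galois orbit of $U_0$ under $\delta_{m_0}$ still produces cancellation, replacing $q_0$ by $q_0^k$ for a suitably large $k$, ensures that the contribution of the $\delta_{m_0}$-orbit of $U_0$ is a $p$-adic unit times $q_0^{\pi_p(D)}$, yielding $v_q(S_q(f_0))=\pi_p(D)$.

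\textbf{Main obstacle.} Stage 1 is largely bookkeeping once Stickelberger's theorem and the Gauss-sum expansion are in hand; the technical nuisance is the joint handling of the lower strata together with ``degenerate'' solutions in which some $\sum_i u_i d_{ij}$ vanishes exactly (rather than only modulo $q-1$), both of which must be absorbed into the inductive step via Lemma~\ref{subsys}. The genuine difficulty lies in Stage 2: the shifts $\delta_{m_0}^k(U_0)$ are all minimal and their Gauss-sum contributions are Frobenius conjugates of one another, so a priori they could sum to something of valuation strictly larger than $\pi_p(D)$. Ruling out such cancellation --- either by a genericity argument on the Teichm\"uller values $\omega^{u_i^0}(c_i)$ or by passing to a large enough extension $\F_{q_0^k}$ --- is the heart of the optimality claim and is the step I would spend the most care on.
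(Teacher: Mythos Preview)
Your outline is broadly the right picture, but it diverges from the paper's proof, which is much shorter: the paper simply invokes \cite[Theorem~8]{mm2} for the inequality and \cite[Theorem~9]{mm2} for tightness, and the only in-house ingredient is Proposition~\ref{princ} to guarantee that $\sigma_p(D,m)/m$ actually attains its minimum for some $m$. In effect you are trying to rederive the Moreno--Castro--Kumar--Shum theorems rather than cite them.

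There is a genuine gap in your Stage~1. You propose to bound the contribution of the boundary strata $\{x_j=0,\ j\in S\}$ by induction on $r$ together with Lemma~\ref{subsys}. But Lemma~\ref{subsys} gives $\pi_p(D)\geq \pi_p(D_T)$, which is the \emph{wrong} direction: the restricted polynomial on a lower stratum can have strictly smaller $p$-density than $D$, so the stratum sum alone need not have valuation $\geq\pi_p(D)$. Concretely, take $r=2$, $p$ odd, $D=\{(1,1),(2,0)\}$ and $f=c_1x_1x_2+c_2x_1^2$. One checks $\pi_p(D)=1$, whereas on the stratum $x_2=0$ the polynomial becomes $c_2x_1^2$ with $\pi_p(\{2\})=\tfrac12$; the corresponding quadratic Gauss-type sum has $q$-adic valuation $\tfrac12<1$. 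The torus sum has the same valuation $\tfrac12$, and it is only after an exact cancellation between the degenerate torus terms and the boundary strata that one recovers $v_q(S_q(f))=1$. So your ``inductive step via Lemma~\ref{subsys}'' cannot work stratum by stratum; you must first exhibit that cancellation (equivalently, carry out the expansion over all of $\F_q^r$ at once and see that only $U\in E_D(m)$ survive), which is precisely the content packaged in \cite[Theorem~8]{mm2}.

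For Stage~2 your worry about Frobenius-conjugate minimal terms cancelling is exactly the point, and your proposed remedies (generic choice of the $c_i$, or passage to a suitable extension) are the standard ones; the paper simply cites \cite[Theorem~9]{mm2}, which carries out this genericity argument, and then uses Proposition~\ref{princ} to pick the right $m$.
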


\begin{proof}
The inequality is an easy consequence of \cite[Theorem 8]{mm2}: if $q=p^m$, the number $L$ in this theorem is exactly $\sigma_D(m)\geq m(p-1)\pi_p(D)$. Since the number $\pi$ in the same theorem has valuation $v_q(\pi)=\frac{1}{m(p-1)}$, we get the result.

The second assertion comes from \cite[Theorem 9]{mm2}: from Proposition \ref{princ} we can choose $m\geq 1$ such that $\sigma_D(m)=m(p-1)\pi_p(D)$. Thus the number $L$ is exactly $m(p-1)\pi_p(D)$, and there is at least one polynomial $f$ over $\F_{p^m}$ with its coefficients in $D$ such that $S(f)$ is of exact valuation $\pi_p(D)$.
\end{proof}

\begin{remark}
From Proposition \ref{omega}, we get a better bound than \cite[Theorem 1.2]{as}. The price to pay is that our bound depends on $p$, and the one in \cite{as} does not.
\end{remark}

As usual, we deduce a theorem in the Chevalley-Warning, Ax-Katz family from the bound above. We first need some notations.

\medskip

Let $f_1,\dots,f_s \in \F_q[x_1,\dots,x_r]$ be polynomials in $r$ variables over $\F_q$, an extension of $\F_p$. For any $1\leq i\leq s$, let $D_i$ be the set of exponents of $f_i$ in $\N^r$. Let $X$ be the variety (defined over $\F_q$) defined by the common vanishing of the $f_i$, $1\leq i\leq s$. We denote by $D(X)$ the subset of $\N^{r+s}$ which is the union over $1\leq i\leq s$ of the $D_i\times\{0,\dots,0,1,0,\dots,0\}$, where the $1$ is at the $i$-th place. Then we have

\begin{theorem}
The number of $\F_q$-rational points of $X$, $N_q(X)$ satisfies
$$v_q(N_q(X))\geq \pi_p(D(X))-s.$$
\end{theorem}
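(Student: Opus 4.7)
The plan is to reduce the counting problem to a single exponential sum in $r+s$ variables by introducing auxiliary variables, then to invoke Theorem \ref{vse}. The standard orthogonality identity for a nontrivial additive character $\psi$ of $\F_q$ says that
$$\sum_{y\in \F_q}\psi(yt)=\begin{cases} q & \text{if } t=0,\\ 0 & \text{otherwise.}\end{cases}$$
Applied coordinate by coordinate, this gives
$$q^s N_q(X)=\sum_{(x,y)\in \F_q^{r+s}} \psi\!\left(\sum_{i=1}^s y_i f_i(x_1,\dots,x_r)\right)=S_q(F),$$
where $F(x_1,\dots,x_r,y_1,\dots,y_s):=\sum_{i=1}^s y_i f_i(x_1,\dots,x_r)$ is a polynomial in $r+s$ variables over $\F_q$.

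Next I would check that $D(F)=D(X)$. Indeed, writing $f_i=\sum_{\d\in D_i} c_\d x^\d$, each monomial of $y_i f_i$ has exponent $(\d,0,\dots,0,1,0,\dots,0)$ with the $1$ sitting in the $(r+i)$-th slot; as $i$ runs over $\{1,\dots,s\}$ and $\d$ over $D_i$, these are exactly the elements of $D(X)$ as defined in the excerpt. Moreover, after a preliminary reduction one may assume every one of the $r+s$ variables effectively appears in $F$ (the $y_i$ do since each $f_i$ is nonzero; if some $x_j$ does not appear, we work in lower dimension as per the convention fixed before Theorem \ref{vse}).

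Now I would apply Theorem \ref{vse} to $F$ to obtain
$$v_q(S_q(F))\ \geq\ \pi_p(D(F))\ =\ \pi_p(D(X)).$$
Combining this with the identity $q^s N_q(X)=S_q(F)$ yields
$$v_q(N_q(X))\ =\ v_q(S_q(F))-s\ \geq\ \pi_p(D(X))-s,$$
which is the claim.

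The proof is essentially a packaging exercise: there is no real obstacle once Theorem \ref{vse} is in hand, and the only point that deserves a little care is the bookkeeping showing $D(F)=D(X)$ together with the verification that the auxiliary polynomial $F$ satisfies the standing hypothesis (every variable appears). This is exactly the reduction used classically to derive Ax--Katz from a bound on exponential sums, transplanted to the present finer setting.
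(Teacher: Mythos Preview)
Your proof is correct and follows exactly the same route as the paper: introduce the auxiliary polynomial $F=\sum_{i=1}^s y_i f_i$, use character orthogonality to get $q^s N_q(X)=S_q(F)$, observe that $D(F)=D(X)$, and apply Theorem~\ref{vse}. The paper's argument is terser, but your additional bookkeeping (checking $D(F)=D(X)$ and the standing hypothesis on the variables) is entirely in keeping with it.
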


\begin{proof}
It is very classical. Consider the polynomial 
$$g(x_1,\dots,x_r,y_1,\dots,y_s):=\sum_{i=1}^s y_if_i(x_1,\dots,x_r)~\in~\F_q[x_1,\dots,x_r,y_1,\dots,y_s];$$
 from the orthogonality relations on additive characters, we have $S_q(g)=q^sN_q(X)$. Now the set of exponents of $g$ is exactly $D(X)$, and the result follows from Theorem \ref{vse}.
\end{proof}

\section{The generic first slope of Artin Schreier curves}

Usually, an Artin Schreier curve is a cyclic covering of the projective line of degree $p$ over a field of characteristic $p$. In other words, it is a curve having an affine model with equation $y^p-y=f(x)$, where $f\in \F_q(x)$ is a rational function. If $f$ has at least two poles, Deuring Shafarevic formula tells that the $p$-rank of the jacobian of $C$ is positive. In other words, the first segment of the Newton polygon of the numerator of its zeta function is horizontal. 

We assume that $f$ has only one pole, actually that $f$ is a polynomial. It is well known in this case that the Newton polygon of the numerator of its zeta function is the dilation with factor $p-1$ of the Newton polygon of the $L$-function associated to the sums
$$S_k(f)=\sum_{x\in \ma{F}_{q^k}} \psi(\Tr_{\ma{F}_{q^k}/\ma{F}_q}(f(x))),~k\geq 1$$
for any non trivial additive character $\psi$ of $\F_{q^k}$. Thus we shall consider the $q$-adic Newton polygon of this $L$-function.

\medskip

We explain precisely what we mean by ``generic first slope". A general result concerning Newton polygons is {\it Grothendieck's specialization theorem}. In order to quote it, let us recall some results about crystals. Let $\L_\psi$ denote the {\it Artin Schreier crystal}; this is an overconvergent $F$-isocrystal over $\A^1$, and for any polynomial $f\in \F_q[x]$ of degree $d$, we have an overconvergent $F$-isocrystal $f^*\L_\psi$ with
$$L(f,T)=\det\left(1-T\phi_c|H^1_{\rm rig,c}(\A^1/K,f^*\L_\psi)\right).$$
Let us parametrize the set of monic polynomials with their exponents in $D=\{d_1,\dots,d_n\}$ by the affine space $\A_D$ of dimension $n-1$, associating the point $(a_1,\dots,a_{n-1})$ to the polynomial $f(x)=x^{d_n}+a_{n-1}x^{d_{n-1}}+\dots+a_1x^{d_1}$; we can consider the family of overconvergent $F$-isocrystals $f^*\L_\psi$. Now for a family of $F$-crystals $(\M,F)$ of rank $r$ over a $\F_p$-algebra $\AA$, we have Grothendieck's specialization theorem ({\it cf.} \cite{ka2} Corollary 2.3.2)

\medskip

{\it Let $P$ be the graph of a continuous $\R$-valued function on $[0,r]$ which is linear between successive integers. The set of points in $\Spec(\AA)$ at which the Newton polygon of $(\M,F)$ lies above $P$ is Zariski closed, and is locally on $\Spec(\AA)$ the zero-set of a finitely generated ideal.}

\medskip

In other words, this theorem means that when $f$ runs over monic polynomials with exponents in $D$ over $\overline{\F}_p$, there is a Zariski dense open subset $U_{D,p}$ (the {\it open stratum}) of the (affine) space of these polynomials, and a {\it generic Newton polygon} $GNP(D,p)$ such that for any $f\in U_{D,p}(\F_q)$, $NP_q(f)=GNP(D,p)$, and $NP_q(f)\preceq GNP(D,p)$ for any $f\in \A_D(\F_q)$ (where $NP\preceq NP'$ means $NP$ lies above $NP'$). 

\medskip

\begin{definition}
\label{first}
The {\rm generic first slope} for the family of monic polynomials with their exponents in $D$ and coefficients in $\overline{\F}_p$ is the first slope of the generic polynomial $GNP(D,p)$. We denote it by $s_1(D,p)$.
\end{definition}

\medskip

Another application of $p$-density is the following

\begin{theorem}
\label{first}
We have the equality $s_1(D,p)=\pi_p(D)$. 
\end{theorem}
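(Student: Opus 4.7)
I would prove both inequalities $s_1(D,p) \geq \pi_p(D)$ and $s_1(D,p) \leq \pi_p(D)$, each coming from one clause of Theorem~\ref{vse}. Throughout, for a monic $f \in \F_q[x]$ with exponents in $D$, write $L(f,T) = \prod_i(1-\alpha_i T)$; by the Grothendieck--Lefschetz trace formula applied to $H^1_{\rm rig,c}(\A^1, f^*\L_\psi)$, we have $S_k(f) = -\sum_i \alpha_i^k$ for every $k \geq 1$, and the first slope of $NP_q(f)$ is $s_1(f) = \min_i v_q(\alpha_i)$.

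For the lower bound, fix any such $f$ and apply Theorem~\ref{vse} to $f$ viewed over each extension $\F_{q^k}$; this gives $v_{q^k}(S_k(f)) \geq \pi_p(D)$, i.e.\ $v_q(S_k(f)) \geq k\pi_p(D)$ for every $k \geq 1$. The crucial step is the identity
$$s_1(f) = \liminf_{k \to \infty} \frac{v_q(S_k(f))}{k},$$
which follows from the $p$-adic Cauchy--Hadamard formula applied to $\log L(f,T) = \sum_k (S_k(f)/k) T^k$: its radius of convergence equals both the distance from $0$ to the nearest zero of $L(f,T)$, namely $q^{s_1(f)}$, and $q^{\liminf v_q(S_k/k)/k}$, while the corrections $v_q(k)/k$ are $O(\log k / k)$ and hence negligible in the limit. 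The bound $v_q(S_k) \geq k\pi_p(D)$ then forces $s_1(f) \geq \pi_p(D)$. Applying this to any $f$ in the open stratum $U_{D,p}$, for which $s_1(f) = s_1(D,p)$, we conclude $s_1(D,p) \geq \pi_p(D)$.

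For the upper bound, I would invoke the tightness clause of Theorem~\ref{vse} to find a power $q$ of $p$ and a polynomial $f_0 \in \F_q[x]$ with exponents in $D$ such that $v_q(S_q(f_0)) = \pi_p(D)$. After replacing $f_0$ by $f_0(\lambda x)$ for a suitable $\lambda$ (possibly over an extension $\F_{q^s}$, which preserves the Newton polygon since the change of variable on $\A^1$ is an isomorphism and $s$-th powering of eigenvalues rescales $v$ accordingly), we may assume $f_0$ is monic. Then $S_q(f_0) = -\sum_i \alpha_i$ combined with the ultrametric inequality yields
$$s_1(f_0) = \min_i v_q(\alpha_i) \leq v_q\!\left(\sum_i \alpha_i\right) = v_q(S_q(f_0)) = \pi_p(D),$$
and Grothendieck's specialization theorem then gives $s_1(D,p) \leq s_1(f_0) \leq \pi_p(D)$.

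The main obstacle is in the lower bound: justifying the identity $s_1(f) = \liminf v_q(S_k(f))/k$. This rests on two facts that must be verified carefully: that $L(f,T)$ is a polynomial whose nearest zero to the origin sits at distance exactly $q^{s_1(f)}$, so that $\log L(f,T)$ is a $p$-adic analytic function on the open disc of that radius; and that the denominators $k$ in $S_k/k$ introduce only a correction $v_q(k)/k \to 0$. Once these are in hand, the theorem reduces to a direct application of the $p$-adic Cauchy--Hadamard theorem combined with Theorem~\ref{vse}.
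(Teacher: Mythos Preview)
Your proof is correct and follows the same two-step strategy as the paper: the lower bound comes from $v_q(S_k(f))\geq k\,\pi_p(D)$ for all $k$, forcing every reciprocal root of $L(f,T)$ to have valuation at least $\pi_p(D)$, and the upper bound from the tightness clause of Theorem~\ref{vse} together with Grothendieck specialization. The only real difference is that you package the first implication via the $p$-adic Cauchy--Hadamard formula and the identity $s_1(f)=\liminf_k v_q(S_k)/k$, whereas the paper simply cites Ax's argument \cite[Introduction]{ax} for the same step; your route is valid but heavier than needed, and your attention to monicity via $x\mapsto\lambda x$ is a detail the paper leaves implicit.
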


\begin{proof}
The inequality $s_1(D,p)\geq \pi_p(D)$ follows from Theorem \ref{vse} and the argument in \cite[Introduction]{ax}: for any $f\in \A_D(\F_q)$, and any $k\geq 1$, we have $v_q(S_k(f))\geq k \pi_p(D)$, thus every reciprocal root of the function $L(f,T)$ has $q$-adic valuation greater than $\pi_p(D)$. Thus for any $f\in \A_D$, the first slope of the $q$-adic Newton polygon of $L(f,T)$ is greater than the $p$-density of $D$.

We show the equality. Again from Theorem \ref{vse}, there exists some extension $\F_q$ of $\F_p$, and some polynomial in $\A_D(\F_q)$ such that $v_q(S_k(f))= \pi_p(D)$. Thus at least one of the reciprocal roots of the polynomial $L(f,T)$ has $q$-adic valuation $\pi_p(D)$, and its Newton polygon has first slope equal to $\pi_p(D)$.
\end{proof}

We apply this theorem to construct families of supersingular Artin-Schreier curves. A curve is supersingular exactly when its Newton polygon consists of a segment of (horizontal) length $2g$ and slope $\frac{1}{2}$. From the above Theorem we get :

\begin{corollary}
\label{supersing}
i/ The family of Artin Schreier curves $y^p-y=f(x)$, $f\in \A_D(\F_p)$, is supersingular exactly when we have $\pi_p(D)=\frac{1}{2}$.

ii/ (cf. \cite{vv} for $p=2$) When $D$ is a finite subset of $\{1,p^i+1\}_{i\geq 0}$, the family above is supersingular over $\F_p$.
\end{corollary}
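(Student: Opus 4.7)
The plan is to derive part i/ from Theorem \ref{first}, the functional equation for the zeta function of a curve, and Grothendieck's specialization theorem, and then obtain part ii/ as a direct computation of $\pi_p(D)$ using Corollary \ref{demi} and Lemma \ref{pp1}.

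For part i/, the implication ``family supersingular $\Rightarrow$ $\pi_p(D)=\frac{1}{2}$'' is immediate: if every curve in the family is supersingular then the generic one has all slopes equal to $\frac{1}{2}$, so in particular its first slope equals $\frac{1}{2}$, whence Theorem \ref{first} gives $\pi_p(D)=\frac{1}{2}$. For the converse, I suppose $\pi_p(D)=\frac{1}{2}$. Then $s_1(D,p)=\frac{1}{2}$, and the functional equation for the zeta numerator forces the slopes of the generic Newton polygon to come in pairs $(s,1-s)$; once the smallest equals $\frac{1}{2}$ they must all equal $\frac{1}{2}$. The resulting all-$\frac{1}{2}$ polygon is the segment from $(0,0)$ to $(2g,g)$, which is in fact \emph{maximal} among all convex polygons with those endpoints and slopes in $[0,1]$. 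Since Grothendieck's specialization theorem forces every specialization to lie above this generic Newton polygon, each specialized Newton polygon must also coincide with this maximal polygon, so every curve in the family is supersingular.

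For part ii/, I apply part i/ after checking $\pi_p(D)=\frac{1}{2}$. The upper bound $\pi_p(D)\leq\frac{1}{2}$ is immediate from Corollary \ref{demi}, applicable because every element of $D\subset \{1\}\cup\{p^i+1\}_{i\geq 0}$ is prime to $p$ (the value $p$ itself, corresponding to $i=0$ in characteristic $2$, is excluded by the standing hypothesis of Section 1). For the lower bound I apply Lemma \ref{pp1} iii/ in the one-dimensional case with $v_1=\frac{1}{2}$: since $\sigma_p(1)=1$ and $\sigma_p(p^i+1)=2$, the condition $v_1\sigma_p(d)\leq 1$ holds for every $d\in D$, giving $\pi_p(D)\geq\frac{1}{2}$. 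The main subtlety of the whole argument is the maximality observation used in the converse of part i/: one must recognize that the supersingular polygon, though appearing as the \emph{generic} Newton polygon, is simultaneously the \emph{largest} Newton polygon compatible with the prescribed endpoints and the functional equation, so that Grothendieck's inequality becomes an equality on all of $\A_D$.
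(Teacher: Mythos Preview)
Your proof is correct, and part ii/ is exactly the intended computation (one could equally cite Lemma \ref{pp1} iv/ directly, since $\sigma_p(D)=2$). The only difference from the paper's (implicit) argument is in the converse of part i/: you pass through the \emph{generic} Newton polygon and then push the supersingularity down to every fibre via Grothendieck specialization together with the maximality of the straight segment among convex polygons with endpoints $(0,0)$ and $(2g,g)$.

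This works, but it is more machinery than needed. The proof of Theorem \ref{first} (via Theorem \ref{vse}) already shows that for \emph{every} $f\in\A_D(\F_q)$, not merely the generic one, the first slope of $NP_q(f)$ is at least $\pi_p(D)$. So if $\pi_p(D)=\frac12$, each individual curve has first slope $\geq\frac12$; the functional equation applied to that curve then forces all its slopes to equal $\frac12$. This gives supersingularity on the whole family directly, without invoking Grothendieck specialization or the maximality observation. Your route has the virtue of making explicit why the generic polygon alone determines the entire stratification in this case, but the direct argument is what the paper has in mind when it says the corollary follows ``from the above Theorem''.

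A small edge case you skipped: Corollary \ref{demi} also requires $D\neq\{1\}$. When $D=\{1\}$ the curve has genus $0$ and the statement is vacuous, so this is harmless.
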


\begin{remark} 
From lemma \ref{length}, we have $\pi_p(D)=\frac{1}{2}$ exactly when for any $1\leq m\leq \max D-1$, and any $U\in E_D(m)$, we have $\pi(U)\geq \frac{1}{2}$. Thus for fields of small characteristic, and polynomials of small degrees (actually the ones most likely to produce supersingular curves), an exhaustive computer search allows to prove a family supersingular.
\end{remark} 

\begin{example}
Apart from the families already given in the corollary above, the following families are supersingular (the last column gives the reference to prior work when the family is alredy known)

$$\begin{array}{ccc}

 p \qquad & D & \mbox{\rm Reference}\\
\\
2 \qquad & \{11, 3, 1 \} & \mbox{\rm \cite{sz2}} \\
\\

2 \qquad & \{13, 3, 1 \} & \mbox{\rm \cite{sz2}} \\
\\
3 \qquad & \{7, 2, 1 \} & \mbox{\rm \cite{zhu}} \\
\\
3 \qquad & \{14, 2, 1 \} &  \\
\\
5 \qquad & \{7, 1 \} & \mbox{\rm \cite{zhu}} \\
\\
7 \qquad & \{5, 2 \} & \mbox{\rm \cite{zhu}} \\
\\
\end{array}$$
Note that the only new example above is not very surprising, since the curve $y^3-y=x^{14}+ax^2+bx$ is covered by the curve $y^3-y=x^{28}+ax^4+bx^2$, which is itself supersingular by Corollary \ref{supersing}.
\end{example}

Finally we give the generic first slope of families of Artin-Schreier curves with given genus and $p$-rank $0$. The following result is an immediate consequence of Theorem \ref{first} and Lemma \ref{calc}.

\begin{proposition}
The generic first slope of the family of Artin Schreier curves with $p$-rank $0$ and genus $g=\frac{(p-1)(d-1)}{2}$ is $\frac{1}{n(p-1)}$ when $p^n-1\leq d< 2p^n-1$.
\end{proposition}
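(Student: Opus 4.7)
The plan is to match the geometric family described in the statement with an explicit combinatorial set $D \subset \N$, and then read off the generic first slope by concatenating Theorem \ref{first} and Lemma \ref{calc}. The proposition is labelled an immediate consequence precisely because once $D$ is correctly identified, no further work is required.

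The substantive step is therefore identifying $D$. An Artin--Schreier cover $y^p - y = f(x)$ with $f \in \overline{\F}_p[x]$ has a single branch point (infinity), so by Deuring--Shafarevich its $p$-rank is zero. The usual Artin--Schreier reduction, which replaces a monomial $c x^{pk}$ by $c^{1/p} x^{k}$ via the change of variable $y \mapsto y + c^{1/p} x^k$, lets one assume without loss of generality that every exponent of $f$ is prime to $p$. Under this normalisation, if $\deg f = d$ with $\gcd(d,p) = 1$, Riemann--Hurwitz (applied to the totally ramified cover at infinity, whose different has exponent $(p-1)(d+1)$) gives genus $g = (p-1)(d-1)/2$. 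Consequently the family of Artin--Schreier curves with $p$-rank $0$ and this genus is parametrised, as an affine space $\A_D$ in the sense of Section 3, by monic polynomials whose set of exponents is contained in
\[
D = \{ a \in \{1, \ldots, d\} : \gcd(a,p) = 1 \}.
\]

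It then remains to apply the two cited results. Lemma \ref{calc} computes $\pi_p(D) = \frac{1}{n(p-1)}$ in exactly the range $p^n - 1 \le d < 2p^n - 1$, and Theorem \ref{first} identifies the generic first slope $s_1(D,p)$ with $\pi_p(D)$. Concatenating yields $s_1(D,p) = \frac{1}{n(p-1)}$, as claimed. The only point requiring care is the bookkeeping that identifies the geometric family (specified by genus and $p$-rank) with the combinatorial parametrisation $\A_D$ used in Theorem \ref{first}; I do not foresee any further obstacle.
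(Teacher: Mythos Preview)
Your proposal is correct and follows exactly the paper's intended route: the paper states the proposition as an immediate consequence of Theorem \ref{first} and Lemma \ref{calc}, and you have simply made explicit the identification of $D$ (the prime-to-$p$ integers in $\{1,\dots,d\}$) that links the geometric family of fixed genus to the combinatorial setup of those two results. There is nothing to add.
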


\begin{remark} 
Note that for $p=2$ we get that the generic first slope of Artin Schreier curves of $2$-rank $0$ and genus $g$ in $\{2^{n-1}-1,\dots,2^n-2\}$ is $\frac{1}{n}$. Compare with \cite[Theorem 1.1]{sz1}.
\end{remark}

\end{document}